\newtheorem{dfn}{Definition}[section]
\newtheorem{prop}[dfn]{Proposition}
\newtheorem{thm}[dfn]{Theorem}
\newtheorem{lem}[dfn]{Lemma}
\newtheorem{exple}[dfn]{Example}
\newtheorem{rem}[dfn]{Remark}
\begin{document}
\title{A combinatorial construction of symplectic expansions}
\author{Yusuke Kuno}
\date{}
\maketitle

\begin{abstract}
The notion of a symplectic expansion directly relates
the topology of a surface to formal symplectic geometry.
We give a method to construct a symplectic expansion
by solving a recurrence formula given in terms of the 
Baker-Campbell-Hausdorff series.
\end{abstract}

\section{Introduction}
Let $\Sigma$ be a compact connected oriented surface of genus $g>0$
with one boundary component.
Choose a basepoint $*$ on the boundary $\partial \Sigma$ and
let $\pi=\pi_1(\Sigma,*)$ be the fundamental group of $\Sigma$.

The notion of {\it {\rm (}generalized{\rm )} Magnus expansions} was introduced
by Kawazumi \cite{Ka} in his study of the mapping class group of a surface.
By definition, the mapping class group $\mathcal{M}_{g,1}$ is
the group of homeomorphisms of $\Sigma$ fixing $\partial \Sigma$
pointwise, modulo isotopies fixing $\partial \Sigma$ pointwise.
The group $\mathcal{M}_{g,1}$ faithfully acts on $\pi$, a free group
of rank $2g$, and
it is known as {\it the theorem of Dehn-Nielsen} that $\mathcal{M}_{g,1}$
is identified with a subgroup of the automorphism group of a free group:
$$\mathcal{M}_{g,1}=\{ \varphi\in {\rm Aut}(\pi); \varphi(\zeta)=\zeta \}.$$
Here, $\zeta\in \pi$ is the element corresponding to the boundary. See \S2.
By choosing a Magnus expansion,
the completed group ring of $\pi$ (with respect to the augmentation ideal)
is identified with the completed tensor algebra generated by the
first homology of the surface. In this way we obtain a tensor
expression of the action of $\mathcal{M}_{g,1}$ on $\pi$.
From this point of view Kawazumi obtained
extensions of the Johnson homomorphisms $\tau_k$
introduced by Johnson \cite{Joh1} \cite{Joh2}.
For details, see \cite{Ka}.

Actually the treatment in \cite{Ka} is on the automorphism
group of a free group, rather than the mapping class group.
There are infinitely many Magnus expansions, and the arguments
in \cite{Ka} hold for any Magnus expansions.
Recently, Massuyeau \cite{Mas} introduced the notion of
{\it symplectic expansions}, which are Magnus expansions satisfying
a certain kind of boundary condition, which comes from the fact
that $\pi$ has a particular element corresponding to the boundary
$\partial \Sigma$. Some nice properties of symplectic expansions
are clarified by \cite{KK}. In particular, it is shown that there is
a Lie algebra homomorphism from the Goldman Lie algebra of $\Sigma$
(see Goldman \cite{Go}) to ``associative",
one of the three Lie algebras in formal symplectic geometry
by Kontsevich \cite{Kon}, via a symplectic expansion
(see \cite{KK} Theorem 1.2.1).

Although there are infinitely many symplectic expansions
(see \cite{KK} Proposition 2.8.1), there are not so many known examples.
The boundary condition is so strong to be satisfied.
For instance, {\it the fatgraph Magnus expansion}
given by Bene-Kawazumi-Penner \cite{BKP}, is unfortunately, not symplectic.
Kawazumi \cite{Ka2} \S6 first constructed an $\mathbb{R}$-valued
symplectic expansion, called {\it the harmonic Magnus expansion},
by a transcendental method. Massuyeau \cite{Mas} Proposition 5.6 also gave a
$\mathbb{Q}$-valued symplectic expansion using {\it the LMO functor}.

The purpose of this paper is to present another construction of
symplectic expansions. Our construction is elementary and
suitable for computer-aided calculation.

\begin{thm}
\label{comb.exp}
There is an algorithm to construct a symplectic
expansion $\theta^{\mathcal{S}}$ associated to any free generating
set $\mathcal{S}$ for $\pi$.
\end{thm}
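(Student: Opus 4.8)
\textit{The plan} is to realize $\theta^{\mathcal S}$ as a group-like Magnus expansion, prescribed by its logarithm on the generators and computed degree by degree. Put $H=H_1(\Sigma;\mathbb Q)$ with its intersection form, let $\omega\in\Lambda^2H=\widehat L_2(H)$ be the associated symplectic element, and write $\widehat L=\widehat L(H)$ for the completed free Lie algebra. A group-like Magnus expansion $\theta$ associated to $\mathcal S=\{x_1,\dots,x_{2g}\}$ is nothing but a choice of $u_i:=\log\theta(x_i)=\sum_{n\ge1}u_i^{(n)}\in\widehat L$ with first-degree part $u_i^{(1)}=[x_i]$, the higher homogeneous parts $u_i^{(n)}$ ($n\ge2$) being arbitrary; and $\theta$ is symplectic exactly when $\log\theta(\zeta)=\omega$. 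Expressing $\zeta=x_{i_1}^{\varepsilon_1}\cdots x_{i_k}^{\varepsilon_k}$ as a reduced word in $\mathcal S$ and iterating the Baker--Campbell--Hausdorff series, $\log\theta(\zeta)=\mathrm{BCH}(\varepsilon_1u_{i_1},\dots,\varepsilon_ku_{i_k})$. Its degree-$1$ part is $[\zeta]=0$, and its degree-$2$ part equals $\omega$ for every $\theta$ (a direct computation, reflecting that the intersection form is encoded in the boundary word), so the symplecticity condition is the infinite system $[\log\theta(\zeta)]_n=0$, $n\ge3$.

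The engine is the observation that $u_i^{(n)}$ enters $\log\theta(\zeta)$ for the first time in degree $n+1$, and there only linearly, through the quadratic term of $\mathrm{BCH}$ --- every higher bracket contributes in degree $\ge n+2$, and the degree-$(n+1)$ part of the linear term of $\mathrm{BCH}$ is $\sum_j\varepsilon_ju_{i_j}^{(n+1)}=0$. Hence, once $u_i^{(m)}$ are fixed for $m<n$, the equation $[\log\theta(\zeta)]_{n+1}=0$ takes the form $L_{n+1}\big((u_i^{(n)})_i\big)=-C_{n+1}$, where $C_{n+1}\in\widehat L_{n+1}$ is the explicit element produced from the already-determined data by a truncation of $\mathrm{BCH}$, and $L_{n+1}\colon\bigoplus_{i=1}^{2g}\widehat L_n\to\widehat L_{n+1}$ is the fixed linear map $(w_i)_i\mapsto\tfrac12\sum_{j<l}\varepsilon_j\varepsilon_l\big([w_{i_j},[x_{i_l}]]+[[x_{i_j}],w_{i_l}]\big)$. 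This is the recurrence: at stage $n$ one solves this linear system over $\mathbb Q$ for the $2g$-tuple $(u_i^{(n)})_i$; choices at stage $n$ leave the lower-degree equations intact, since $u_i^{(n)}$ acts only in degrees $\ge n+1$. Each stage is finite --- a bounded number of $\mathrm{BCH}$ terms and one linear system --- so this is an algorithm; the solution is unique modulo $\ker L_{n+1}$, and fixing once and for all a linear section of each $L_{n+1}$ pins down a definite $\theta^{\mathcal S}$ (the kernels account for the known abundance of symplectic expansions).

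\textit{The crux} is that the recurrence never jams, i.e.\ that each $L_{n+1}$ is surjective ($n\ge2$). I would prove this by comparing $L_{n+1}$ with the action of derivations. Identify $\bigoplus_i\widehat L_n$ with the space $\mathrm{Der}_{n-1}(\widehat L)$ of degree-$(n-1)$ derivations $D$ via $D\mapsto(D[x_i])_i$ --- an isomorphism because $\{[x_i]\}$ is a basis of $H$. Replacing $\theta$ by $\exp(D)\circ\theta$ replaces $u_i^{(n)}$ by $u_i^{(n)}+D[x_i]$ and replaces $[\log\theta(\zeta)]_{n+1}$ by $[\log\theta(\zeta)]_{n+1}+D(\omega)$, all other contributions having degree $\ge n+2$; comparing with the previous paragraph gives $L_{n+1}\big((D[x_i])_i\big)=D(\omega)$. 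So it suffices to see that $D\mapsto D(\omega)$ is onto $\widehat L_{n+1}$ for $n\ge2$, and this is elementary: writing $\omega=\sum_\ell[a_\ell,b_\ell]$ in a symplectic basis and using $\widehat L_{n+1}=\sum_x[x,\widehat L_n]$ over the basis vectors $x$ of $H$, one can choose $Da_\ell,Db_\ell\in\widehat L_n$ making $D(\omega)=\sum_\ell\big([Da_\ell,b_\ell]+[a_\ell,Db_\ell]\big)$ any prescribed element. Granting this, the recurrence runs to all orders, and $\theta^{\mathcal S}(x_i):=\exp\big(\sum_nu_i^{(n)}\big)$ --- which automatically extends to a group-like Magnus expansion since $\pi$ is free --- satisfies $\theta^{\mathcal S}(\zeta)=\exp(\omega)$, proving the theorem. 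The steps I expect to need the most care are precisely the identification of the linear map $L_{n+1}$ coming from $\mathrm{BCH}$ along an arbitrary word $\zeta$, the verification that it equals $D\mapsto D(\omega)$ under the above identification, and hence its surjectivity; everything else is routine $\mathrm{BCH}$ bookkeeping.
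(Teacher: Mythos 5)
Your argument is correct, and it shares the paper's skeleton: build $\log\theta^{\mathcal S}$ on the generators degree by degree, note that the degree-$2$ part of $\log\theta(\zeta)$ is automatically $\omega$ (this is Lemma \ref{commutator} here), and observe that the new degree-$n$ unknowns enter $[\log\theta(\zeta)]_{n+1}$ only linearly, through the quadratic term of BCH (the linear BCH term dies because $\zeta\in[\pi,\pi]$). Where you genuinely diverge is at the crux, the solvability of the linear equation $L_{n+1}\bigl((u_i^{(n)})_i\bigr)=-C_{n+1}$. You prove surjectivity of $L_{n+1}$ abstractly, by identifying $\bigoplus_i\mathcal L_n$ with degree-$(n-1)$ derivations via $D\mapsto(D[x_i])_i$, checking $L_{n+1}\bigl((D[x_i])_i\bigr)=D(\omega)$ through the substitution $\theta\mapsto\exp(D)\circ\theta$, and noting that $D\mapsto D(\omega)$ is onto $\mathcal L_{n+1}$; you then fix an arbitrary linear section of each $L_{n+1}$. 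This is essentially Massuyeau's existence argument (\cite{Mas} Lemma 2.16), upgraded to an arbitrary free generating set, and it is sound. The paper instead produces an explicit preimage with no choices: it writes $V_{n+1}=\sum_i S_i\otimes V_n^{S_i}$, applies the Dynkin idempotent to set $W_i=-\tfrac{1}{n+1}\Phi(V_n^{S_i})$, and converts the $W_i$ into the $\ell_n(s_i)$ via the integral matrix coming from $\omega=\sum_i S_i\otimes Z_i$ (Lemma \ref{basis} guaranteeing invertibility). What the paper's route buys is precisely what your route gives up: canonicity. Your expansion depends on the chosen sections, so the closed recursive formulas of \S5 and the naturality of Proposition \ref{natural} under $\{\varphi:\varphi(\zeta)=\zeta^{\pm1}\}$ would not come for free; conversely, your derivation picture gives a cleaner conceptual reason why the recurrence never jams, and transparently parametrizes the non-uniqueness by $\ker L_{n+1}$.
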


It should be remarked here that in the proof of the existence of
symplectic expansions (\cite{Mas} Lemma 2.16), Massuyeau already showed
how to construct a symplectic expansion degree after degree. Our
construction is also inductive, but by using {\it the Dynkin idempotents}
it fixes the choices that had to be done in the inductive step of
\cite{Mas} Lemma 2.16, hence is canonical. Moreover, our construction
works for any free generating
set for $\pi$ whereas \cite{Mas} Lemma 2.16 only deals with symplectic generators.

In \S2, we recall Magnus expansions and symplectic expansions.
Theorem \ref{comb.exp} will be proved in \S3.
In \S4 we show a naturality of our construction under the
action of a subgroup of ${\rm Aut}(\pi)$ including the mapping class
group $\mathcal{M}_{g,1}$.
In \S5, we discuss the symplectic expansion associated
to symplectic generators.

\section{Basic notions}
We denote by $\zeta$ the loop parallel to $\partial \Sigma$ and
going by counter-clockwise manner. Explicitly, if we take symplectic
generators $\alpha_1,\beta_1,\ldots, \alpha_g,\beta_g \in \pi$ as shown
in Figure 1, $\zeta=\prod_{i=1}^g [\alpha_i,\beta_i]$.
Here our notation for commutators is $[x,y]:=xyx^{-1}y^{-1}$.

\begin{center}
Figure 1: symplectic generators for $g=2$
\unitlength 0.1in
\begin{picture}( 37.5000, 18.2000)(  2.0000,-18.7000)
%
\special{pn 13}%
\special{ar 3750 1070 200 800  0.0000000 6.2831853}%
%
\special{pn 13}%
\special{ar 1000 1070 800 800  1.5707963 4.7123890}%
%
\special{pn 13}%
\special{ar 1000 1070 300 300  0.0000000 6.2831853}%
%
\special{pn 13}%
\special{ar 2400 1070 300 300  0.0000000 6.2831853}%
%
\special{pn 8}%
\special{ar 1000 1070 500 500  1.5707963 6.2831853}%
%
\special{pn 8}%
\special{ar 2400 1070 500 500  1.5707963 6.2831853}%
%
\special{pn 8}%
\special{ar 2000 1070 500 500  1.5707963 3.1415927}%
%
\special{pn 8}%
\special{ar 3400 1070 500 500  1.5707963 3.1415927}%
%
\special{pn 8}%
\special{ar 1800 1070 500 500  1.5707963 3.1415927}%
%
\special{pn 8}%
\special{ar 3200 1070 500 500  1.5707963 3.1415927}%
%
\special{pn 8}%
\special{ar 2200 1070 500 500  1.5707963 3.1415927}%
%
\special{pn 8}%
\special{ar 3600 1070 500 500  1.5707963 3.1415927}%
%
\special{pn 8}%
\special{pa 2700 1070}%
\special{pa 2700 1038}%
\special{pa 2700 1006}%
\special{pa 2700 974}%
\special{pa 2700 942}%
\special{pa 2700 910}%
\special{pa 2700 878}%
\special{pa 2700 846}%
\special{pa 2700 814}%
\special{pa 2698 782}%
\special{pa 2698 750}%
\special{pa 2698 718}%
\special{pa 2700 686}%
\special{pa 2702 654}%
\special{pa 2706 622}%
\special{pa 2710 590}%
\special{pa 2716 558}%
\special{pa 2726 526}%
\special{pa 2736 494}%
\special{pa 2746 464}%
\special{pa 2760 434}%
\special{pa 2776 406}%
\special{pa 2794 380}%
\special{pa 2812 356}%
\special{pa 2834 332}%
\special{pa 2858 310}%
\special{pa 2882 288}%
\special{pa 2900 270}%
\special{sp 0.070}%
%
\special{pn 8}%
\special{pa 3100 1070}%
\special{pa 3100 1038}%
\special{pa 3100 1006}%
\special{pa 3100 974}%
\special{pa 3100 942}%
\special{pa 3100 910}%
\special{pa 3100 878}%
\special{pa 3100 846}%
\special{pa 3102 814}%
\special{pa 3102 782}%
\special{pa 3102 750}%
\special{pa 3102 718}%
\special{pa 3102 686}%
\special{pa 3100 654}%
\special{pa 3096 622}%
\special{pa 3090 590}%
\special{pa 3084 558}%
\special{pa 3076 526}%
\special{pa 3066 494}%
\special{pa 3054 464}%
\special{pa 3040 434}%
\special{pa 3026 406}%
\special{pa 3008 380}%
\special{pa 2988 356}%
\special{pa 2966 332}%
\special{pa 2944 310}%
\special{pa 2920 288}%
\special{pa 2900 270}%
\special{sp}%
%
\special{pn 8}%
\special{pa 1300 1070}%
\special{pa 1300 1038}%
\special{pa 1300 1006}%
\special{pa 1300 974}%
\special{pa 1300 942}%
\special{pa 1300 910}%
\special{pa 1300 878}%
\special{pa 1300 846}%
\special{pa 1300 814}%
\special{pa 1298 782}%
\special{pa 1298 750}%
\special{pa 1298 718}%
\special{pa 1300 686}%
\special{pa 1302 654}%
\special{pa 1306 622}%
\special{pa 1310 590}%
\special{pa 1316 558}%
\special{pa 1326 526}%
\special{pa 1336 494}%
\special{pa 1346 464}%
\special{pa 1360 434}%
\special{pa 1376 406}%
\special{pa 1394 380}%
\special{pa 1412 356}%
\special{pa 1434 332}%
\special{pa 1458 310}%
\special{pa 1482 288}%
\special{pa 1500 270}%
\special{sp 0.070}%
%
\special{pn 8}%
\special{pa 1700 1070}%
\special{pa 1700 1038}%
\special{pa 1700 1006}%
\special{pa 1700 974}%
\special{pa 1700 942}%
\special{pa 1700 910}%
\special{pa 1700 878}%
\special{pa 1700 846}%
\special{pa 1702 814}%
\special{pa 1702 782}%
\special{pa 1702 750}%
\special{pa 1702 718}%
\special{pa 1702 686}%
\special{pa 1700 654}%
\special{pa 1696 622}%
\special{pa 1690 590}%
\special{pa 1684 558}%
\special{pa 1676 526}%
\special{pa 1666 494}%
\special{pa 1654 464}%
\special{pa 1640 434}%
\special{pa 1626 406}%
\special{pa 1608 380}%
\special{pa 1588 356}%
\special{pa 1566 332}%
\special{pa 1544 310}%
\special{pa 1520 288}%
\special{pa 1500 270}%
\special{sp}%
%
\special{pn 8}%
\special{pa 1000 1570}%
\special{pa 3600 1570}%
\special{fp}%
%
\special{pn 13}%
\special{pa 1000 1870}%
\special{pa 3746 1870}%
\special{fp}%
%
\special{pn 13}%
\special{pa 1000 270}%
\special{pa 3746 270}%
\special{fp}%
%
\special{pn 20}%
\special{sh 1}%
\special{ar 3600 1570 10 10 0  6.28318530717959E+0000}%
\special{sh 1}%
\special{ar 3600 1570 10 10 0  6.28318530717959E+0000}%
%
\special{pn 8}%
\special{pa 1676 770}%
\special{pa 1700 670}%
\special{fp}%
\special{pa 1700 670}%
\special{pa 1726 770}%
\special{fp}%
%
\special{pn 8}%
\special{pa 3076 770}%
\special{pa 3100 670}%
\special{fp}%
\special{pa 3100 670}%
\special{pa 3126 770}%
\special{fp}%
%
\special{pn 8}%
\special{pa 926 546}%
\special{pa 1026 570}%
\special{fp}%
\special{pa 1026 570}%
\special{pa 926 596}%
\special{fp}%
%
\special{pn 8}%
\special{pa 2326 546}%
\special{pa 2426 570}%
\special{fp}%
\special{pa 2426 570}%
\special{pa 2326 596}%
\special{fp}%
%
\special{pn 8}%
\special{pa 3600 1700}%
\special{pa 3660 1784}%
\special{fp}%
\special{pa 3660 1784}%
\special{pa 3646 1682}%
\special{fp}%
\put(5.6000,-6.0500){\makebox(0,0)[lb]{$\alpha_1$}}%
\put(14.4500,-2.3000){\makebox(0,0)[lb]{$\beta_1$}}%
\put(21.0000,-5.3000){\makebox(0,0)[lb]{$\alpha_2$}}%
\put(28.2500,-2.2000){\makebox(0,0)[lb]{$\beta_2$}}%
\put(36.3500,-14.7500){\makebox(0,0)[lb]{$*$}}%
\put(34.2500,-17.8500){\makebox(0,0)[lb]{$\zeta$}}%
%
\special{pn 8}%
\special{ar 3350 460 100 100  6.2831853 6.2831853}%
\special{ar 3350 460 100 100  0.0000000 4.7123890}%
%
\special{pn 8}%
\special{pa 3450 460}%
\special{pa 3390 490}%
\special{fp}%
\special{pa 3450 460}%
\special{pa 3486 516}%
\special{fp}%
\end{picture}%

\end{center}

Let $H_{\mathbb{Z}}:=H_1(\Sigma;\mathbb{Z})$ be
the first integral homology group of $\Sigma$. 
We denote $H:=H_{\mathbb{Z}} \otimes_{\mathbb{Z}} \mathbb{Q}$.
$H_{\mathbb{Z}}$ is naturally isomorphic to $\pi/[\pi,\pi]$,
the  abelianization of $\pi$.
With this identification in mind, we denote
$[x]:=x {\rm \ mod\ } [\pi,\pi]\in H_{\mathbb{Z}}$, or
$[x]:=(x {\rm \ mod\ } [\pi,\pi]) \otimes_{\mathbb{Z}} 1 \in H$,
for $x\in \pi$.

Let $\widehat{T}$ be the completed tensor algebra generated by $H$.
Namely $\widehat{T}=\prod_{m=0}^{\infty} H^{\otimes m}$, where
$H^{\otimes m}$ is the tensor space of degree $m$. For each
$p\ge 1$, denote $\widehat{T}_p:=\prod_{m\ge p}^{\infty} H^{\otimes m}$.
Note that the subset $1+\widehat{T}_1$ constitutes a subgroup
of the multiplicative group of the algebra $\widehat{T}$.

\begin{dfn}[Kawazumi \cite{Ka}]
A map $\theta\colon \pi \to 1+\widehat{T}_1$ is called a
{\rm (}$\mathbb{Q}$-valued{\rm )} Magnus expansion if
\begin{enumerate}
\item[{\rm (1)}] $\theta\colon \pi \to 1+\widehat{T}_1$ is
a group homomorphism, and
\item[{\rm (2)}] $\theta(x)\equiv 1+[x]\ {\rm mod}\ \widehat{T}_2$,
for any $x\in \pi$.
\end{enumerate}
\end{dfn}

The {\it standard} Magnus expansion defined by $\theta(s_i)=1+[s_i]$,
for some free generating set $\{s_i \}_i$ for $\pi$, is the simplest
example of a Magnus expansion. This is introduced by Magnus \cite{Mag}
and is often used in combinatorial group theory.

Let $\widehat{\mathcal{L}}\subset \widehat{T}$ be the completed free Lie algebra
generated by $H$. The bracket is given by $[u,v]:=u\otimes v-v\otimes u$, and its degree $p$-part
$\mathcal{L}_p=\widehat{\mathcal{L}}\cap H^{\otimes p}$ is successively
given by $\mathcal{L}_1=H$, and $\mathcal{L}_p=[H,\mathcal{L}_{p-1}]$, $p\ge 2$.
Via the intersection form $(\ \cdot \ )\colon H\times H \to \mathbb{Q}$
on $\Sigma$, $H$ and its dual $H^*={\rm Hom}_{\mathbb{Q}}(H,\mathbb{Q})$
are canonically identified by the map $H\cong H^*$, $X\mapsto (Y\mapsto (Y\cdot X))$.
Let $\omega\in \mathcal{L}_2 \subset H^{\otimes 2}$ be the symplectic form,
namely the tensor corresponding to $-1_H\in {\rm Hom}_{\mathbb{Q}}(H,H)
=H^* \otimes H=H\otimes H$. Explicitly,
if we take symplectic generators as in Figure 1, then
$A_i=[\alpha_i]$ and $B_i=[\beta_i]$ satisfy $(A_i\cdot B_j)=-(B_j\cdot A_i)=\delta_{ij}$
and $(A_i\cdot A_j)=(B_i\cdot B_j)=0$, hence we have
\begin{equation}
\label{eq:2-1}
\omega=\sum_{i=1}^g A_i\otimes B_i-B_i\otimes A_i=\sum_{i=1}^g [A_i,B_i].
\end{equation}

For a Magnus expansion $\theta$, let $\ell^{\theta}:=\log \theta$.
Here, $\log$ is the formal power series
$$\log (x)=\sum_{n=1}^{\infty}\frac{(-1)^{n-1}}{n}(x-1)^n$$
defined on the set $1+\widehat{T}_1$. The inverse of $\log$ is given
by the exponential
$\exp (x)=\sum_{n=0}^{\infty}(1/n!)x^n$.
Note that the Baker-Campbell-Hausdorff formula
\begin{eqnarray}
u\star v:=\log (\exp (u) \exp (v)) &=& u+v+\frac{1}{2}[u,v]
+\frac{1}{12}[u-v,[u,v]] \nonumber \\
 & & -\frac{1}{24}[u,[v,[u,v]]]+\cdots \label{eq:2-2}
\end{eqnarray}
endows the underlying set of $\widehat{\mathcal{L}}$ with a group structure.
A priori, $\ell^{\theta}$ is a map
from $\pi$ to $\widehat{T}_1$.

\begin{dfn}[Massuyeau \cite{Mas}]
\label{symp.exp}
A Magnus expansion $\theta$ is called symplectic if
\begin{enumerate}
\item[{\rm (1)}] $\theta$ is group-like, i.e.,
$\ell^{\theta}(\pi)\subset \widehat{\mathcal{L}}$, and
\item[{\rm (2)}] $\theta(\zeta)=\exp (\omega)$,
or equivalently, $\ell^{\theta}(\zeta)=\omega$.
\end{enumerate}
\end{dfn}

\begin{rem}
{\rm
Let $I\pi$ be the augmentation ideal of the group ring $\mathbb{Q}\pi$,
and $\widehat{\mathbb{Q}\pi}:=\varprojlim_m \mathbb{Q}\pi/I\pi^m$
the completed group ring of $\pi$. Any Magnus expansion $\theta$
induces an isomorphism $\theta\colon \widehat{\mathbb{Q}\pi}
\stackrel{\cong}{\to} \widehat{T}$ of complete augmented algebras.
See \cite{Ka} Theorem 1.3. Moreover, let $\langle \zeta \rangle$ be the cyclic
subgroup of $\pi$ generated by $\zeta$, and $\mathbb{Q}[[\omega]]$
the ring of formal power series in the symplectic form $\omega$,
which is regarded as a subalgebra of $\widehat{T}$ in an obvious way.
Then any symplectic expansion $\theta$ induces an isomorphism
$\theta\colon (\widehat{\mathbb{Q}\pi},\widehat{\mathbb{Q}\langle \zeta \rangle})
\to (\widehat{T},\mathbb{Q}[[\omega]])$ of complete Hopf algebras.
See \cite{KK} \S6.2.
}
\end{rem}

\section{Main construction}
We fix a free generating set
$\mathcal{S}=\{ s_1,\ldots, s_{2g} \}$ for $\pi$.
We denote $S_i:=[s_i]\in H$, $1\le i\le 2g$.
Let $x_1x_2\cdots x_p$ be the unique reduced word in $\mathcal{S}$ representing $\zeta$.

\begin{dfn}
Fix an integer $n\ge 1$. A set $\{ \ell_j(s_i); 1\le i\le 2g, 1\le j\le n \}
\subset \widehat{\mathcal{L}}$ is called a partial symplectic expansion up
to degree $n$, if
\begin{enumerate}
\item[{\rm (1)}] $\ell_1(s_i)=S_i$, for $1\le i\le 2g$,
\item[{\rm (2)}] $\ell_j(s_i)\in \mathcal{L}_j$, for $1\le i\le 2g$,
$1\le j\le n$, and
\item[{\rm (3)}] if we set $\bar{\ell}_n(s_i)=\sum_{j=1}^n \ell_j(s_i)$
for $1\le i\le 2g$, then
\begin{equation}
\label{eq:3-1}
\bar{\ell}_n(x_1)\star \bar{\ell}_n(x_2)\star \cdots \star \bar{\ell}_n(x_p)
\equiv \omega {\rm \ mod\ } \widehat{T}_{n+2}.
\end{equation}
\end{enumerate}
Here, we understand $\bar{\ell}_n(s_i^{-1})=-\bar{\ell}_n(s_i)$.
\end{dfn}

This notion could be thought of as an approximation to
a symplectic expansion. In this section we give a method to
refine an approximation up to degree $n-1$, to the one up to
degree $n$. Repeating this process, we will
obtain a symplectic expansion.

We need two lemmas.

\begin{lem}
\label{basis}
Suppose $4g$ elements $Y_1,\ldots,Y_{2g},Z_1,\ldots,Z_{2g}\in H$ satisfy
$\sum_{i=1}^{2g} Y_i\otimes Z_i=\omega \in H^{\otimes 2}$.
Then $Z_1,\ldots,Z_{2g}$ constitute a basis for $H$.
\end{lem}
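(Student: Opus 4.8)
The plan is to exploit the nondegeneracy of the symplectic form $\omega$. Since $\dim_{\mathbb{Q}}H=2g$ and we are handed exactly $2g$ vectors $Z_1,\ldots,Z_{2g}$, it suffices to prove that they span $H$; they will then automatically be linearly independent, hence a basis.

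First I would make precise the sense in which $\omega$ is nondegenerate. By definition $\omega\in H^{\otimes 2}$ is the tensor corresponding to $-1_H\in {\rm Hom}_{\mathbb{Q}}(H,H)=H^*\otimes H=H\otimes H$. Unwinding these identifications, contracting the \emph{first} tensor factor of $\omega$ against a functional $\phi\in H^*$ defines a linear map
$$c\colon H^*\longrightarrow H,\qquad c(\phi)=(\phi\otimes {\rm id}_H)(\omega),$$
and this map is, up to the identification $H\cong H^*$ afforded by the intersection form and an overall sign, the identity of $H$; in particular $c$ is a linear isomorphism. If one prefers a hands-on check: writing $\omega=\sum_{i=1}^g(A_i\otimes B_i-B_i\otimes A_i)$ as in \eqref{eq:2-1} and letting $\{A_j^*,B_j^*\}$ denote the basis of $H^*$ dual to $\{A_j,B_j\}$, one computes $c(A_j^*)=B_j$ and $c(B_j^*)=-A_j$, so $c$ carries a basis to a basis.

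Now I would insert the hypothesis. Writing $\omega=\sum_{i=1}^{2g}Y_i\otimes Z_i$, we get $c(\phi)=\sum_{i=1}^{2g}\phi(Y_i)\,Z_i$ for every $\phi\in H^*$, so the image of $c$ is contained in ${\rm span}_{\mathbb{Q}}(Z_1,\ldots,Z_{2g})$. Since $c$ is surjective, this span is all of $H$, and as there are exactly $2g=\dim_{\mathbb{Q}}H$ of the $Z_i$, they form a basis of $H$.

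There is essentially no obstacle here: the only point requiring care is the bookkeeping of which tensor slot is contracted and the sign conventions entering the identification $H\cong H^*$, but neither affects the conclusion that $c$ is an isomorphism, which is all that is used.
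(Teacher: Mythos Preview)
Your proof is correct and follows essentially the same approach as the paper's. Both arguments contract the first tensor slot of $\omega=\sum_i Y_i\otimes Z_i$ and use that the resulting map is an isomorphism (the paper invokes directly that $\omega$ corresponds to $-1_H\in{\rm Hom}_{\mathbb Q}(H,H)$, writing $X=\omega(-X)=\sum_i(-X\cdot Y_i)Z_i$, while you phrase the same contraction as $c\colon H^*\to H$ and verify bijectivity on a basis); in either formulation the image lands in the span of the $Z_i$, forcing them to span $H$ and hence to be a basis.
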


\begin{proof}
Since $\omega$ corresponds to $-1_H\in {\rm Hom}_{\mathbb{Q}}(H,H)$
(see \S2), for any $X\in H$, we have
$$X=\omega(-X)=\sum_{i=1}^{2g}(-X\cdot Y_i)Z_i.$$
This shows that the $2g$ elements $Z_1,\ldots,Z_{2g}$ generate $H$. This proves
the lemma.
\end{proof}

Since $\pi$ is free, the quotient
$[\pi,\pi]/[\pi,[\pi,\pi]]$ is naturally isomorphic to $\Lambda^2H_{\mathbb{Z}}$,
the second exterior product of $H_{\mathbb{Z}}$. The isomorphism is induced by
the homomorphism $f\colon [\pi,\pi] \to \Lambda^2H_{\mathbb{Z}}$ which
maps the commutator $[x,y]$ to $[x]\wedge [y]$. Note that
$\Lambda^2 H_{\mathbb{Z}}$ is naturally identified with a subgroup of $H^{\otimes 2}$ by
$$\Lambda^2 H_{\mathbb{Z}}\to H^{\otimes 2},\ X\wedge Y\mapsto X\otimes Y-Y\otimes X,$$
and under this identification, we have $f(\zeta)=\omega$.

\begin{lem}
\label{commutator}
Let $y_1\cdots y_q$ be a word in $\mathcal{S}$ and suppose
$y_1\cdots y_q$ lies in the commutator subgroup $[\pi,\pi]$.
Then
$$f(y_1\cdots y_q)=\frac{1}{2}\sum_{i<j} [y_i]\wedge [y_j].$$
\end{lem}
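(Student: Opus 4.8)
The plan is to prove this by induction on $q$, the length of the word. The key observation is that $f$ is a homomorphism on $[\pi,\pi]$, so I want to reduce a general word lying in $[\pi,\pi]$ to a product of commutators and track what happens to both sides of the claimed identity. The natural strategy is to peel off one letter at a time using the identity $y_1 y_2 \cdots y_q = y_1 (y_2 \cdots y_q y_1^{-1}) \cdot y_1$ — but that is circular. A cleaner approach: since $y_1 \cdots y_q \in [\pi,\pi]$, the multiset of letters $\{y_1,\dots,y_q\}$ (counted with signs, i.e.\ $s_i^{-1}$ contributing $-1$) sums to zero in $H_{\mathbb Z}$. So I can find indices with cancelling letters and use a swap move.

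The cleanest inductive scheme is the following. First handle the base case: the shortest words in $[\pi,\pi]$ are, after reduction, either empty or of the form $y y^{-1}$, for which both sides vanish. For the inductive step, I would use the elementary identity
$$y_1\cdots y_i y_{i+1}\cdots y_q = (y_1\cdots y_{i-1})[y_i,y_{i+1}](y_1\cdots y_{i-1})^{-1}\cdot y_1\cdots y_{i-1} y_{i+1} y_i y_{i+2}\cdots y_q,$$
i.e.\ swapping two adjacent letters changes the word by a conjugate of the commutator $[y_i,y_{i+1}]$. Applying $f$ (and using that $f$ kills conjugates of commutators up to the second commutator subgroup, since $f([x,y])=[x]\wedge[y]$ depends only on homology classes, so $f(w[a,b]w^{-1})=[a]\wedge[b]=f([a,b])$), I get that under an adjacent transposition of the $i$th and $(i+1)$st letters, the value of $f$ changes by $[y_i]\wedge[y_{i+1}]$. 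On the other hand, the right-hand side $\frac12\sum_{i<j}[y_i]\wedge[y_j]$ changes by exactly $[y_i]\wedge[y_{i+1}]$ under the same transposition (the only affected term flips sign). Hence the difference between the two sides is invariant under adjacent transpositions of the letters.

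Since any permutation is a product of adjacent transpositions, the difference LHS $-$ RHS depends only on the multiset of letters, not their order. I can therefore reorder the letters freely (staying in $[\pi,\pi]$, which is automatic since the abelianized sum is unchanged): group each $s_i$ with a matching $s_i^{-1}$, so the word becomes a concatenation of blocks $s_{i_1}s_{i_1}^{-1}s_{i_2}s_{i_2}^{-1}\cdots$, which is trivial in $\pi$. For the trivial word, $f$ gives $0$; and the right-hand side $\frac12\sum_{i<j}[y_i]\wedge[y_j]$ also vanishes after this reordering, because for each value $v$ of a letter class, the paired $+v$ and $-v$ contributions cancel (one checks that $\sum_{i<j}[y_i]\wedge[y_j]=\frac12\bigl((\sum_i[y_i])\otimes(\sum_j[y_j]) - \sum_i [y_i]\otimes[y_i]\bigr)$-type expansion vanishes when $\sum_i[y_i]=0$ and the $[y_i]$ come in $\pm$ pairs). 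Thus LHS $=$ RHS for the reordered word, and by transposition-invariance, for the original word.

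The main obstacle I anticipate is bookkeeping the coefficient $\frac12$ correctly and making precise the claim that $f$ of a word is unchanged (not just unchanged mod higher commutators) under the swap move — this requires that the conjugating element's effect genuinely disappears, which holds because $f$ lands in the \emph{abelian} group $\Lambda^2 H_{\mathbb Z}$ and factors through $[\pi,\pi]/[\pi,[\pi,\pi]]$, so conjugation acts trivially. I should state the swap identity carefully and confirm that $f([y_i,y_{i+1}]) = [y_i]\wedge[y_{i+1}]$ matches the increment $\frac12\bigl(\sum_{i<j}\bigr)$ undergoes — namely that a single adjacent transposition changes the sum $\sum_{k<l}[y_k]\wedge[y_l]$ by exactly $2[y_i]\wedge[y_{i+1}]$, giving the factor $\frac12$ cleanly.
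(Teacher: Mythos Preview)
Your argument is correct and genuinely different from the paper's. The paper proceeds by a direct induction on the word length: given $y_1\cdots y_q\in[\pi,\pi]$ with $q>2$, it locates some index $i$ with $y_{i+1}=y_1^{-1}$ (which exists since the exponent-sum of $y_1$'s generator is zero), rewrites
\[
y_1\cdots y_q=[y_1,\,y_2\cdots y_i]\cdot y_2\cdots y_i\,y_{i+2}\cdots y_q,
\]
and applies the inductive hypothesis to the shorter tail, checking by hand that $[y_1]\wedge([y_2]+\cdots+[y_i])$ supplies exactly the missing terms of $\tfrac12\sum_{i<j}[y_i]\wedge[y_j]$ involving positions $1$ and $i+1$. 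Your route instead shows that the difference $f(y_1\cdots y_q)-\tfrac12\sum_{i<j}[y_i]\wedge[y_j]$ is invariant under adjacent transpositions of the letters and then evaluates both sides on a convenient reordering where the word is trivially $1$. Both proofs hinge on the same two facts (that $f$ factors through $[\pi,\pi]/[\pi,[\pi,\pi]]$, so conjugation is invisible, and that the exponent-sums vanish), but yours packages them more symmetrically: it makes transparent that the right-hand side is the unique expression in the $[y_i]$'s with the correct transformation law under the symmetric group. The paper's proof is a line or two shorter; yours is arguably more conceptual. One small cleanup: your parenthetical justification that the reordered right-hand side vanishes via a ``$(\sum)\otimes(\sum)$-type expansion'' is not quite the right identity---it is cleanest to verify directly (or by a two-step induction removing one $s\,s^{-1}$ block at a time) that $\sum_{k<l}[y_k]\wedge[y_l]=0$ when the letters are arranged in consecutive $\pm$ pairs.
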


\begin{proof}
We may assume $q\ge 2$. We prove the lemma by induction on $q$.
The case $q=2$ is trivial. Suppose $q>2$. Then there must exist
$i\ge 1$ such that $y_{i+1}=y_1^{-1}$, and
$$y_1\cdots y_q=y_1y_2\cdots y_iy_1^{-1}y_{i+2}\cdots y_q
=[y_1,y_2\cdots y_i]y_2\cdots y_iy_{i+2}\cdots y_q.$$
Hence
$f(y_1\cdots y_q)=f([y_1,y_2\cdots y_i])+
f(y_2\cdots y_iy_{i+2}\cdots y_q)$.
The first term equals
$$[y_1]\wedge ([y_2]+\cdots +[y_i])
=\frac{1}{2}\left( [y_1]\wedge ([y_2]+\cdots +[y_i])+
([y_2]+\cdots +[y_i])\wedge [y_{i+1}]\right)$$ since
$[y_1]=-[y_{i+1}]$, and the second term equals
$$\frac{1}{2}\sum_{\substack{k<\ell; \\ k,\ell \neq 1, i+1}}
[y_k]\wedge [y_{\ell}],$$
by the inductive assumption. This proves the lemma.
\end{proof}

Let $\Phi\colon \widehat{T}_1 \to \widehat{\mathcal{L}}$ be
the linear map defined by
$\Phi(Y_1\otimes \cdots \otimes Y_m)=[Y_1,[\cdots [Y_{m-1},Y_m]\cdots ]]$, $Y_i\in H$, $m\ge 1$.
We have $\Phi(u)=mu$ and $\Phi(uv)=[u,\Phi(v)]$ for any $u\in \mathcal{L}_m$,
$v\in \widehat{T}_1$. See Serre \cite{Ser} Part I, Theorem 8.1, p.28.
The maps $(1/m)\Phi|_{H^{\otimes m}}$ are called {\it the Dynkin idempotents}.
From these two properties we see that the restriction of the map
\begin{equation}
\label{eq:3-2}
\frac{1}{m+1}({\rm id}\otimes \Phi)\colon H^{\otimes m+1}\to H\otimes \mathcal{L}_m
\end{equation}
to $\mathcal{L}_{m+1}$ gives a right inverse of
the bracket $[\ ,\ ]\colon H\otimes \mathcal{L}_m
\twoheadrightarrow \mathcal{L}_{m+1}$.

Let $n\ge 2$ and let $\{ \ell_j(s_i); 1\le j\le n-1,\ 1\le i\le 2g \}$
be a partial symplectic expansion up to degree $n-1$.
We have
\begin{equation}
\label{eq:3-3}
\bar{\ell}_{n-1}(x_1)\star \bar{\ell}_{n-1}(x_2) \cdots
\star \bar{\ell}_{n-1}(x_p) \equiv \omega
{\rm \ mod\ } \widehat{T}_{n+1}.
\end{equation}
Let $V_{n+1}\in \mathcal{L}_{n+1}$ be the degree
$(n+1)$-part of
$\bar{\ell}_{n-1}(x_1)\star \bar{\ell}_{n-1}(x_2) \cdots
\star \bar{\ell}_{n-1}(x_p)$.
By Lemma \ref{commutator} we have
$\omega=f(\zeta)=f(x_1\cdots x_p)=\frac{1}{2}\sum_{i<j}X_i \wedge X_j=
\frac{1}{2}\sum_{i<j}(X_i\otimes X_j-X_j\otimes X_i)$,
where $X_i=[x_i]$.
Since $S_1,\ldots,S_{2g}$ constitute a basis for $H$,
we can uniquely write
\begin{equation}
\label{eq:3-4}
\omega=\frac{1}{2}\sum_{i<j}(X_i\otimes X_j-X_j\otimes X_i)=\sum_{i=1}^{2g}S_i\otimes Z_i,
\quad {\rm where\ }Z_i=\sum_{k}c_{ik}S_k,\quad c_{ik}\in \mathbb{Z}.
\end{equation}
Also, in view of applying (\ref{eq:3-2}) we write $V_{n+1}\in \mathcal{L}_{n+1}
\subset H^{\otimes n+1}$ as
$$V_{n+1}=\sum_{i=1}^{2g}S_i \otimes V_n^{S_i},\quad V_n^{S_i} \in H^{\otimes n}.$$
Now by Lemma \ref{basis}, $Z_1,\ldots, Z_{2g}$ constitute a basis for $H$,
hence the matrix $\{ c_{ik} \}_{i,k}$ is of full rank. Let $\{ d_{ik} \}_{i,k}$
be the inverse matrix of $\{ c_{ik} \}_{i,k}$.

\begin{prop}
\label{main.construction}
Notations are as above. Set $W_i:=(-1/(n+1))\Phi(V_n^{S_i})\in \mathcal{L}_n$
for $1\le i\le 2g$, and $\ell_n(s_i):=\sum_{k} d_{ik}W_k$ for $1\le i\le 2g$. Then
$\{ \ell_j(s_i); 1\le j\le n-1,\ 1\le i\le 2g \} \cup \{ \ell_n(s_i); 1\le i\le 2g \}$
is a partial symplectic expansion up to degree $n$.
\end{prop}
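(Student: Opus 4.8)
The conditions (1) and (2) defining a partial symplectic expansion up to degree $n$ hold at once: (1) and the case $j<n$ of (2) are inherited from the given partial expansion up to degree $n-1$, while $\ell_n(s_i)=\sum_k d_{ik}W_k\in\mathcal{L}_n$ because $\Phi(H^{\otimes n})\subset\mathcal{L}_n$, so each $W_k\in\mathcal{L}_n$. All the content is in establishing (\ref{eq:3-1}) for $n$, namely
$$\bar{\ell}_n(x_1)\star\cdots\star\bar{\ell}_n(x_p)\equiv\omega\ {\rm mod}\ \widehat{T}_{n+2},\qquad \bar{\ell}_n(s_i)=\bar{\ell}_{n-1}(s_i)+\ell_n(s_i).$$
The plan is to treat the passage from $\bar{\ell}_{n-1}$ to $\bar{\ell}_n$ as a first-order perturbation. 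Put $\nu(t):=(\bar{\ell}_{n-1}(x_1)+t\,\ell_n(x_1))\star\cdots\star(\bar{\ell}_{n-1}(x_p)+t\,\ell_n(x_p))$, with the convention $\ell_n(s_i^{-1})=-\ell_n(s_i)$ matching $\bar{\ell}_n(s_i^{-1})=-\bar{\ell}_n(s_i)$, so that $\nu(1)$ is the left-hand side above. Since $\ell_n(s_i)$ has degree $n\ge 2$, every $t^m$-coefficient of $\nu(t)$ with $m\ge 2$ lies in $\widehat{T}_{n+2}$; hence the components of $\nu(t)$ in degrees $\le n+1$ are affine in $t$, and $\nu(1)\equiv\nu(0)+\nu'(0)\ {\rm mod}\ \widehat{T}_{n+2}$. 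By (\ref{eq:3-3}) and the definition of $V_{n+1}$ we have $\nu(0)\equiv\omega+V_{n+1}\ {\rm mod}\ \widehat{T}_{n+2}$, so it suffices to show $\nu'(0)\equiv-V_{n+1}\ {\rm mod}\ \widehat{T}_{n+2}$.

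To compute $\nu'(0)$ I would differentiate $\exp(\nu(t))=\prod_{k=1}^{p}\exp(\bar{\ell}_{n-1}(x_k)+t\,\ell_n(x_k))$ using the standard formula $\tfrac{d}{dt}e^{A}\cdot e^{-A}=\tfrac{e^{{\rm ad}_A}-1}{{\rm ad}_A}(\dot{A})$. Since ${\rm ad}_{\nu}$ raises degree by at least $2$ and $\dot{\nu}$ has degree $\ge n$, the left-hand side reduces to $\nu'(0)$ modulo $\widehat{T}_{n+2}$; on the right-hand side, each operator ${\rm Ad}_{\exp(\bar{\ell}_{n-1}(x_1))\cdots\exp(\bar{\ell}_{n-1}(x_{k-1}))}$ and $\tfrac{e^{{\rm ad}_{\bar{\ell}_{n-1}(x_k)}}-1}{{\rm ad}_{\bar{\ell}_{n-1}(x_k)}}$ truncates to first order, which (writing $X_k=[x_k]$) yields
$$\nu'(0)\equiv\sum_{k=1}^{p}\ell_n(x_k)+\sum_{k=1}^{p}\Big[\tfrac12 X_k+\sum_{j<k}X_j,\ \ell_n(x_k)\Big]\ {\rm mod}\ \widehat{T}_{n+2}.$$
The first sum vanishes: $\zeta\in[\pi,\pi]$ forces every generator of $\mathcal{S}$ to occur with total exponent $0$ in the word $x_1\cdots x_p$. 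So $\nu'(0)$ is concentrated in degree $n+1$, and grouping the second sum according to which generator $x_k$ is, I write it as $\sum_{i=1}^{2g}[Q_i,\ell_n(s_i)]$ for suitable $Q_i\in H$.

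It remains to identify $\sum_i[Q_i,\ell_n(s_i)]$ with $-V_{n+1}$, which is exactly what the definitions of $W_i$ and of the matrix $\{d_{ik}\}$ are for. Assembling the tensors $Q_i\otimes S_i$ and using again that every generator has total exponent $0$ in $x_1\cdots x_p$ (so $\sum_k X_k=0$), a short computation gives $\sum_i Q_i\otimes S_i=\tfrac12\sum_k X_k\otimes X_k+\sum_{j<k}X_j\otimes X_k$, which by Lemma~\ref{commutator} — giving $\omega=f(\zeta)=\tfrac12\sum_{i<j}(X_i\otimes X_j-X_j\otimes X_i)$ — equals $\omega$; since $\{S_i\}$ is a basis of $H$, comparison with the unique expression $\omega=\sum_i Y_i\otimes S_i$ forces $Q_i=Y_i$. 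On the other hand, as $V_{n+1}\in\mathcal{L}_{n+1}$, the right-inverse property of (\ref{eq:3-2}) gives $V_{n+1}=\tfrac{1}{n+1}\sum_i[S_i,\Phi(V_n^{S_i})]=-\sum_i[S_i,W_i]$. Plugging $\ell_n(s_i)=\sum_k d_{ik}W_k$ into $\sum_i[Y_i,\ell_n(s_i)]$ and using that $\{d_{ik}\}$ inverts the antisymmetric matrix $\{c_{ik}\}$ — the step that turns the $\sum_i Y_i\otimes S_i$ form of $\omega$ back into the $\sum_i S_i\otimes Z_i$ form — collapses the double sum to $\sum_i[S_i,W_i]=-V_{n+1}$. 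Hence $\nu(1)\equiv(\omega+V_{n+1})+(-V_{n+1})=\omega\ {\rm mod}\ \widehat{T}_{n+2}$, which is (\ref{eq:3-1}) for $n$. The one genuinely delicate point is the double truncation of the derivative-of-$\star$-product formula together with the identity $\sum_i Q_i\otimes S_i=\omega$; the rest is linear algebra with $\{d_{ik}\}$ plus the two properties $\Phi(u)=mu$ and $\Phi(uv)=[u,\Phi(v)]$ of the Dynkin map.
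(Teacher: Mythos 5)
Your proof is correct and follows essentially the same route as the paper: the paper reads the first-order correction $\tfrac{1}{2}\sum_{i<j}([X_i,\ell_n(x_j)]+[\ell_n(x_i),X_j])$ directly off the Baker-Campbell-Hausdorff series and converts it into $\sum_i[S_i,W_i]=-V_{n+1}$ by applying $[{\rm id},\lambda]$ to (\ref{eq:3-4}), which is exactly the content of your $\nu'(0)$ computation and of your identification $\sum_i Q_i\otimes S_i=\omega$. The $t$-derivative packaging and the decomposition $\omega=\sum_i Y_i\otimes S_i$ (rather than $\sum_i S_i\otimes Z_i$) are cosmetic variants, and your parenthetical appeal to the antisymmetry of $(c_{ik})$ is not actually needed --- only the relation $\sum_i c_{ji}d_{ik}=\delta_{jk}$ is used.
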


\begin{proof}
Set $\bar{\ell}_n(s_i)= \bar{\ell}_{n-1}(s_i)+\ell_n(s_i)$.
Understanding $\ell_n(s_i^{-1})=-\ell_n(s_i)$, we have
$\sum_{i=1}^p \ell_n(x_i)=0$ since $\zeta \in [\pi,\pi]$.
Hence we have $\bar{\ell}_n(x_1)\star \bar{\ell}_n(x_2) \cdots
\star \bar{\ell}_n(x_p) \equiv \omega
{\rm \ mod\ } \widehat{T}_{n+1}$ from (\ref{eq:3-3}).
By (\ref{eq:2-2}) we see that the degree $(n+1)$-part
of $\bar{\ell}_n(x_1)\star \bar{\ell}_n(x_2) \cdots
\star \bar{\ell}_n(x_p)$ equals
\begin{equation}
\label{eq:3-5}
V_{n+1}+\frac{1}{2}\sum_{i<j}
([X_i,\ell_n(x_j)]+[\ell_n(x_i),X_j]).
\end{equation}
Let $\lambda\colon H\rightarrow \mathcal{L}_n$ be the linear
map defined by $\lambda(S_i)=\ell_n(s_i)$, and we apply
the linear map
$[{\rm id},\lambda]\colon H^{\otimes 2}\rightarrow H^{\otimes n+1}$ to (\ref{eq:3-4}).
Then we obtain
$$\frac{1}{2}\sum_{i<j}([X_i,\ell_n(x_j)]-[X_j,\ell_n(x_i)])=\sum_{i=1}^{2g}[S_i,W_i^{\prime}],
\quad W_i^{\prime}=\sum_k c_{ik}\ell_n(s_k).$$
But $W_i^{\prime}=\sum_k \sum_j c_{ik} d_{kj}W_j=W_i$.
Hence (\ref{eq:3-5}) is equal to
$$V_{n+1}+\sum_{i=1}^{2g} [S_i,W_i]
=V_{n+1}-\frac{1}{n+1}\sum_{i=1}^{2g}[S_i,\Phi(V_n^{S_i})]
=V_{n+1}-\frac{1}{n+1}\Phi(V_{n+1})=0,$$
since $V_{n+1}\in \mathcal{L}_{n+1}$. Therefore, we have
$\bar{\ell}_n(x_1)\star \bar{\ell}_n(x_2) \cdots
\star \bar{\ell}_n(x_p) \equiv \omega
{\rm \ mod\ } \widehat{T}_{n+2}$. This completes the proof.
\end{proof}

We can now conclude the proof of Theorem \ref{comb.exp}.
Denote $\mathcal{S}=\{ s_1,\ldots, s_{2g} \}$ and set
$\ell_1(s_i):=S_i$, $1\le i\le 2g$. By the Baker-Campbell-Hausdorff
formula (\ref{eq:2-2}) and Lemma \ref{commutator},
$\{ \ell_1(s_i)\}_{1\le i\le 2g}$ is a partial symplectic
expansion up to degree $1$. Applying Proposition \ref{main.construction},
we obtain $\{ \ell_j(s_i); 1\le i\le 2g, j\ge 1\}$ satisfying (\ref{eq:3-1})
for any $n\ge 1$. Setting $\ell^{\mathcal{S}}(s_i):=\sum_{j=1}^{\infty}\ell_j(s_i)
\in \widehat{\mathcal{L}}$ and $\theta^{\mathcal{S}}(s_i):=\exp (\ell^{\mathcal{S}}(s_i))$,
we extend $\theta^{\mathcal{S}}$ to a homomorphism from $\pi$
using the universality of the free group $\pi$.
Then $\theta^{\mathcal{S}}$ is the desired symplectic expansion.
Note that the result $\theta^{\mathcal{S}}$ does not depend on the
total ordering on the set $\mathcal{S}$.
This completes the proof of Theorem \ref{comb.exp}.

\begin{rem}
{\rm
For a group-like expansion $\theta$, we denote
$\ell^{\theta}(x)=\sum_{j=1}^{\infty}\ell_j^{\theta}(x)$,
$\ell_j^{\theta}(x)\in \mathcal{L}_j$, for $x\in \pi$.
Proposition \ref{main.construction} can be phrased shortly
as: a choice of a free generating set for $\pi$ gives a
canonical way of modifying any group-like expansion $\theta$
satisfying $\ell^{\theta}(\zeta)\equiv \omega {\rm \ mod\ } \widehat{T}_{n+1}$
for some $n\ge 2$ into a group-like expansion satisfying the same
congruence with $n+1$ replaced by $n+2$, without changing
$\ell_j^{\theta}(x)$, for $1\le j\le n-1$.
}
\end{rem}
 
\section{Naturality}
Let ${\rm Aut}(\pi)$ be the automorphism group of $\pi$.
For $\varphi\in {\rm Aut}(\pi)$, let $|\varphi|$ be the filter-preserving
algebra automorphism of $\widehat{T}$ induced by the action of $\varphi$
on the first homology $H$. If $\theta$ is a Magnus expansion,
then the composite $|\varphi|\circ \theta \circ \varphi^{-1}$ is
again a Magnus expansion.

We show a naturality of the symplectic expansion $\theta^{\mathcal{S}}$
given in Theorem \ref{comb.exp}. Note that fatgraph Magnus expansions
have similar property (see \cite{BKP} Theorem 4.2).

\begin{prop}
\label{natural}
Suppose $\varphi\in {\rm Aut}(\pi)$ satisfies
$\varphi(\zeta)=\zeta$, or $\varphi(\zeta)=\zeta^{-1}$.
Then
$$\theta^{\varphi(\mathcal{S})}=
|\varphi|\circ \theta^{\mathcal{S}}\circ \varphi^{-1}.$$
\end{prop}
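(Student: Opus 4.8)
The plan is to verify the asserted identity on the free generating set $\varphi(\mathcal{S})=\{\varphi(s_1),\dots,\varphi(s_{2g})\}$ of $\pi$. Both $\theta^{\varphi(\mathcal{S})}$ and $|\varphi|\circ\theta^{\mathcal{S}}\circ\varphi^{-1}$ are group homomorphisms $\pi\to 1+\widehat{T}_1$ (the latter because $|\varphi|$ restricts to an automorphism of the group $1+\widehat{T}_1$), and $(|\varphi|\circ\theta^{\mathcal{S}}\circ\varphi^{-1})(\varphi(s_i))=|\varphi|(\theta^{\mathcal{S}}(s_i))$, so it suffices to prove $\theta^{\varphi(\mathcal{S})}(\varphi(s_i))=|\varphi|(\theta^{\mathcal{S}}(s_i))$, equivalently $\ell^{\varphi(\mathcal{S})}(\varphi(s_i))=|\varphi|(\ell^{\mathcal{S}}(s_i))$, for all $i$. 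Since both expansions are constructed degree by degree, I would prove the degreewise identity $\ell_n^{\varphi(\mathcal{S})}(\varphi(s_i))=|\varphi|(\ell_n^{\mathcal{S}}(s_i))$ by induction on $n$ and then sum and exponentiate. (I fix the ordering $(\varphi(s_1),\dots,\varphi(s_{2g}))$ on $\varphi(\mathcal{S})$; by the last remark of \S3 the result $\theta^{\varphi(\mathcal{S})}$ is independent of that choice.)

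Before the induction I would record the relevant behaviour of $|\varphi|$ and of the boundary data. As $|\varphi|$ is the grading-preserving algebra automorphism of $\widehat{T}$ induced by $\varphi_*\in\mathrm{GL}(H)$, it commutes with $\exp$, $\log$, $\star$ and the bracket, it carries each $\mathcal{L}_m$ and $\widehat{\mathcal{L}}$ onto itself, it is compatible with the splitting $H^{\otimes(m+1)}=H\otimes H^{\otimes m}$, and — since $\Phi$ is an iterated bracket — it commutes with $\Phi$. From the naturality $f\circ\varphi=\Lambda^2(\varphi_*)\circ f$ of the map $f\colon[\pi,\pi]\to\Lambda^2H_{\mathbb{Z}}\subset H^{\otimes 2}$, together with $f(\zeta)=\omega$ and $f(\zeta^{-1})=-\omega$, one gets $|\varphi|(\omega)=\varepsilon\,\omega$, where $\varepsilon=+1$ if $\varphi(\zeta)=\zeta$ and $\varepsilon=-1$ if $\varphi(\zeta)=\zeta^{-1}$. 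Writing $T_i:=[\varphi(s_i)]=|\varphi|(S_i)$, again a basis of $H$, and applying $|\varphi|$ to $\omega=\sum_i S_i\otimes Z_i$ with $Z_i=\sum_k c_{ik}S_k$, uniqueness of the expansion $\omega=\sum_i T_i\otimes Z_i'$ with $Z_i'\in H$ forces the matrix attached to $\varphi(\mathcal{S})$ to be $\varepsilon\{c_{ik}\}$, hence its inverse to be $\varepsilon\{d_{ik}\}$. Finally, the unique reduced word for $\zeta$ in $\varphi(\mathcal{S})$ is $\varphi(x_1)\cdots\varphi(x_p)$ when $\varepsilon=+1$ and $\varphi(x_p)^{-1}\cdots\varphi(x_1)^{-1}$ when $\varepsilon=-1$, the reversal-and-inversion of a reduced word being reduced.

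The induction then goes as follows. The case $n=1$ is immediate, since $\ell_1^{\varphi(\mathcal{S})}(\varphi(s_i))=T_i=|\varphi|(S_i)=|\varphi|(\ell_1^{\mathcal{S}}(s_i))$. Assuming the identity in all degrees $\le n-1$, the inductive hypothesis together with $\bar{\ell}_{n-1}(t^{-1})=-\bar{\ell}_{n-1}(t)$ and linearity of $|\varphi|$ gives $\bar{\ell}_{n-1}^{\varphi(\mathcal{S})}(\varphi(x_k))=|\varphi|(\bar{\ell}_{n-1}^{\mathcal{S}}(x_k))$ for every letter $x_k$. Since $|\varphi|$ commutes with $\star$, and since $(-u_p)\star\cdots\star(-u_1)=-(u_1\star\cdots\star u_p)$ (apply $\exp$ and use $\exp(-u)=\exp(u)^{-1}$), the starred product defining $V_{n+1}$ for $\varphi(\mathcal{S})$ equals $\varepsilon$ times the image under $|\varphi|$ of the one for $\mathcal{S}$; taking degree-$(n+1)$ parts gives $V_{n+1}^{\varphi(\mathcal{S})}=\varepsilon\,|\varphi|(V_{n+1}^{\mathcal{S}})$. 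Decomposing $V_{n+1}^{\varphi(\mathcal{S})}=\sum_i T_i\otimes V_n^{T_i}$ and using the tensor-splitting compatibility, uniqueness yields $V_n^{T_i}=\varepsilon\,|\varphi|(V_n^{S_i})$; applying $-\tfrac{1}{n+1}\Phi$ and commuting $\Phi$ past $|\varphi|$ gives $W_i^{\varphi(\mathcal{S})}=\varepsilon\,|\varphi|(W_i^{\mathcal{S}})$. Hence $\ell_n^{\varphi(\mathcal{S})}(\varphi(s_i))=\sum_k(\varepsilon d_{ik})(\varepsilon\,|\varphi|(W_k^{\mathcal{S}}))=|\varphi|\bigl(\sum_k d_{ik}W_k^{\mathcal{S}}\bigr)=|\varphi|(\ell_n^{\mathcal{S}}(s_i))$, the two factors $\varepsilon$ cancelling. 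This closes the induction; summing over $n$ and applying $\exp$ completes the proof.

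The only genuinely delicate point is the case $\varphi(\zeta)=\zeta^{-1}$, where three separate signs enter — the reversal-and-inversion of the boundary word, forcing the identity $(-u_p)\star\cdots\star(-u_1)=-(u_1\star\cdots\star u_p)$; the relation $|\varphi|(\omega)=-\omega$; and the consequent negation of the matrix $\{d_{ik}\}$. The content of the argument is precisely that the first two combine into a single factor $\varepsilon$ in $V_{n+1}^{\varphi(\mathcal{S})}=\varepsilon\,|\varphi|(V_{n+1}^{\mathcal{S}})$ while the third supplies a second factor $\varepsilon$, so that $\varepsilon^2=1$ and no sign survives in $\ell_n^{\varphi(\mathcal{S})}(\varphi(s_i))$; keeping these bookkeeping signs straight is where I would concentrate the verification. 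The case $\varphi(\zeta)=\zeta$, which by Dehn--Nielsen contains $\mathcal{M}_{g,1}$, is just the $\varepsilon=+1$ specialization, with all three signs absent.
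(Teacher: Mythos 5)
Your proof is correct and follows essentially the same route as the paper's: induction on the degree $n$ through the construction of Proposition \ref{main.construction}, using the ${\rm Aut}(\pi)$-equivariance of $f$ to get $|\varphi|\omega=\varepsilon\omega$ and tracking how the single sign $\varepsilon$ enters $V_{n+1}$, $\{c_{ik}\}$ and $\{d_{ik}\}$ so that it cancels in $\ell_n$. Your explicit treatment of the reversed-and-inverted boundary word in the case $\varphi(\zeta)=\zeta^{-1}$, via $(-u_p)\star\cdots\star(-u_1)=-(u_1\star\cdots\star u_p)$, correctly fills in a detail the paper compresses into ``the same argument shows''.
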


\begin{proof}
Let $\mathcal{S}=\{ s_1,\ldots, s_{2g}\}$.
We shall put $\mathcal{S}$ on the upper right of the objects
$V_{n+1}$, $\ell_j$, $c_{ik}$, etc, in the proof of Proposition \ref{main.construction}
to indicate their dependence on $\mathcal{S}$.

The equality we are going to prove is equivalent to
$\ell^{\varphi(\mathcal{S})}(\varphi(s_i))=|\varphi|\ell^{\mathcal{S}}(s_i)$,
or, $\ell_n^{\varphi(\mathcal{S})}(\varphi(s_i))=|\varphi|\ell_n^{\mathcal{S}}(s_i)$
for any $n\ge 1$. We prove this by induction on $n$. Since
$\ell_1^{\mathcal{\varphi(\mathcal{S})}}(\varphi(s_i))=[\varphi(s_i)]
=|\varphi|[s_i]$, the case $n=1$ is clear. Suppose $n\ge 2$.

First we assume $\varphi(\zeta)=\zeta$. Then $\varphi(x_1)\cdots \varphi(x_p)$
is a word in $\varphi(\mathcal{S})$ representing $\zeta$, and we have $|\varphi|\omega=\omega$
since $\varphi(\zeta)=\zeta$ and the homomorphism $f\colon [\pi,\pi]\to \Lambda^2H_{\mathbb{Z}}$
in \S3 is ${\rm Aut}(\pi)$-equivariant.
By the inductive assumption, we have
$\bar{\ell}_{n-1}^{\varphi(\mathcal{S})}(\varphi(s_i))
=|\varphi|\bar{\ell}_{n-1}^{\mathcal{S}}(s_i)$, hence
applying $|\varphi|$ to the congruence
$\bar{\ell}_{n-1}^{\mathcal{S}}(x_1)\star \bar{\ell}_{n-1}^{\mathcal{S}}(x_2) \cdots
\star \bar{\ell}_{n-1}^{\mathcal{S}}(x_p) \equiv \omega+V_{n+1}^{\mathcal{S}}
{\rm \ mod\ } \widehat{T}_{n+2}$, we obtain
$V_{n+1}^{\varphi(\mathcal{S})}=|\varphi|V_{n+1}^{\mathcal{S}}$.
Therefore, writing
$V_{n+1}^{\varphi(\mathcal{S})}=\sum_{i=1}^{2g}(|\varphi|S_i )
\otimes V_n^{|\varphi|S_i}$, we have $V_n^{|\varphi|S_i}=|\varphi|V_n^{S_i}$.

On the other hand, applying $|\varphi|$ to (\ref{eq:3-4}), we obtain
$$\omega=\sum_{i=1}^{2g}|\varphi|S_i \otimes Z_i^{\varphi(\mathcal{S})},\quad
Z_i^{\varphi(\mathcal{S})}=\sum_k c_{ik}|\varphi|S_k.$$
This implies $c_{ik}^{\varphi(\mathcal{S})}=c_{ik}^{\mathcal{S}}$
hence $d_{ik}^{\varphi(\mathcal{S})}=d_{ik}^{\mathcal{S}}$. We conclude
$W_i^{\varphi(\mathcal{S})}=|\varphi|W_i^{\mathcal{S}}$ and
$\ell_n^{\varphi(\mathcal{S})}(\varphi(s_i))=
|\varphi |\ell_n^{\mathcal{S}}(s_i)$, as desired.

If $\varphi(\zeta)=\zeta^{-1}$, the same argument shows that
$V_{n+1}^{\varphi(\mathcal{S})}=-|\varphi|V_{n+1}^{\mathcal{S}}$
and $c_{ik}^{\varphi(\mathcal{S})}=-c_{ik}^{\mathcal{S}}$ because
in this case $\varphi(x_1)\cdots \varphi(x_p)$ is a word in $\varphi(\mathcal{S})$
representing $\zeta^{-1}$ and we have $|\varphi|\omega=-\omega$.
Hence we again obtain
$\ell_n^{\varphi(\mathcal{S})}(\varphi(s_i))=
|\varphi |\ell_n^{\mathcal{S}}(s_i)$.
This completes the induction.
\end{proof}

\section{Symplectic generators}
Let $\mathcal{S}_0=\{ \alpha_1,\beta_1,\ldots,\alpha_g,\beta_g \}$ be symplectic generators
as in \S2, and let $\theta^0=\theta^{\mathcal{S}_0}$ be the symplectic
expansion associated to $\mathcal{S}_0$, given by the algorithm of Theorem \ref{comb.exp}.
For simplicity we write
$\alpha_1,\beta_1,\ldots,\alpha_g,\beta_g=\xi_1,\ldots,\xi_{2g}$.
Let $T\in {\rm Aut}(\pi)$ be the automorphism defined by
$T(\xi_i)=\xi_{2g+1-i}$, $1\le i\le 2g$. Then we have $T(\zeta)=\zeta^{-1}$
and $T(\mathcal{S}^0)=\mathcal{S}^0$. By Proposition \ref{natural},
we obtain a certain kind of symmetry for $\theta^0$.

\begin{prop}
Let $\theta^0$ be the symplectic expansion as above. Then
$$\theta^0(\xi_{2g+1-i})=|T|\theta^0(\xi_i),\quad 1\le i\le 2g.$$
\end{prop}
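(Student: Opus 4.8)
The plan is to read this off directly from the naturality statement, Proposition \ref{natural}, applied to $\varphi = T$. The preceding discussion already records the facts we need: $T\in{\rm Aut}(\pi)$ (it is defined on the free generating set $\mathcal{S}_0$ and satisfies $T^2={\rm id}$, hence is an automorphism), $T(\zeta)=\zeta^{-1}$, and $T(\mathcal{S}_0)=\mathcal{S}_0$ as a subset of $\pi$. The relation $T(\zeta)=\zeta^{-1}$ puts us in the second case of the hypothesis of Proposition \ref{natural}; if one wishes, it can be checked by hand from $\zeta=\prod_{i=1}^g[\alpha_i,\beta_i]$ together with $T(\alpha_i)=\beta_{g+1-i}$ and $T(\beta_i)=\alpha_{g+1-i}$, which gives $T(\zeta)=[\beta_g,\alpha_g]\cdots[\beta_1,\alpha_1]=([\alpha_1,\beta_1]\cdots[\alpha_g,\beta_g])^{-1}=\zeta^{-1}$.

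The key step is then to apply Proposition \ref{natural}, which yields $\theta^{T(\mathcal{S}_0)}=|T|\circ\theta^{\mathcal{S}_0}\circ T^{-1}$. Since $T(\mathcal{S}_0)=\mathcal{S}_0$ as a generating set — only the labelling is changed, by the reversing permutation $\xi_i\leftrightarrow\xi_{2g+1-i}$ — and $\theta^{\mathcal{S}}$ depends only on $\mathcal{S}$ as a set (the remark at the end of \S3), we obtain $\theta^{T(\mathcal{S}_0)}=\theta^{\mathcal{S}_0}=\theta^0$. Hence $\theta^0=|T|\circ\theta^0\circ T^{-1}$. Composing on the right with $T$ (using that $|T|$ is an algebra automorphism of $\widehat{T}$, so this is legitimate) gives $\theta^0\circ T=|T|\circ\theta^0$, and evaluating at $\xi_i$ together with $T(\xi_i)=\xi_{2g+1-i}$ produces exactly $\theta^0(\xi_{2g+1-i})=|T|\theta^0(\xi_i)$.

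I do not anticipate a genuine obstacle here: the argument is pure bookkeeping built on Proposition \ref{natural}. The one point worth stating carefully is the passage from $\theta^{T(\mathcal{S}_0)}$ to $\theta^0$, which rests on the order-independence of the construction noted in \S3; I would cite that remark explicitly rather than leave it implicit, since a priori the algorithm of \S3 proceeds through an ordered generating set together with a chosen reduced word for $\zeta$, and one must know these auxiliary choices do not affect the output before identifying $\theta^{T(\mathcal{S}_0)}$ with $\theta^{\mathcal{S}_0}$.
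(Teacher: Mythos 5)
Your proposal is correct and is essentially the paper's own argument: the paper derives this proposition exactly by applying Proposition \ref{natural} to $\varphi=T$, using $T(\zeta)=\zeta^{-1}$ and $T(\mathcal{S}_0)=\mathcal{S}_0$ together with the order-independence of $\theta^{\mathcal{S}}$ noted at the end of \S3. Your extra care about the order-independence point is appropriate (and the only auxiliary choice is the ordering, since the reduced word for $\zeta$ in a fixed generating set is unique).
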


Finally, we give a more explicit formula for $\ell^{\mathcal{S}_0}$
in a form suitable for computer-aided calculation.
First we give another description of $V_{n+1}$ which does not
involve the Baker-Campbell-Hausdorff series.
Let $n\ge 2$ and let $\{ \ell_j(s_i); 1\le j\le n-1,\ 1\le i\le 2g \}$
be a partial symplectic expansion up to degree $n-1$. Set
$\bar{\theta}_{n-1}(s_i):=\exp(\bar{\ell}_{n-1}(s_i))$, and
$\bar{\theta}_{n-1}(s_i^{-1}):=\exp(-\bar{\ell}_{n-1}(s_i))$.
From (\ref{eq:3-3}), we have
$\bar{\ell}_{n-1}(x_1)\star \bar{\ell}_{n-1}(x_2) \cdots
\star \bar{\ell}_{n-1}(x_p) \equiv \omega+V_{n+1}
{\rm \ mod\ } \widehat{T}_{n+2}$. Applying the exponential,
we obtain $\bar{\theta}_{n-1}(x_1)\bar{\theta}_{n-1}(x_2)
\cdots \bar{\theta}_{n-1}(x_p)\equiv \exp(\omega)+V_{n+1}
{\rm \ mod\ } \widehat{T}_{n+2}$. Hence
\begin{equation}
\label{eq:5-1}
V_{n+1}=\left(\bar{\theta}_{n-1}(x_1)\bar{\theta}_{n-1}(x_2)
\cdots \bar{\theta}_{n-1}(x_p)-\exp(\omega) \right)_{n+1},
\end{equation}
where the subscript $n+1$ in the right hand side means taking the
degree $(n+1)$-part.

Let us consider the case $\mathcal{S}=\mathcal{S}_0$.
Then $\zeta=\prod_{i=1}^g [\alpha_i,\beta_i]$.
For $X,Y\in \widehat{T}_1$, by a direct computation, we have
\begin{equation}
\label{eq:5-2}
(1+X)(1+Y)(1+X)^{-1}(1+Y)^{-1}=1+\sum_{i,j\ge 0}(-1)^{i+j}[X,Y]X^iY^j.
\end{equation}
See Magnus-Karrass-Solitar \cite{MKS} \S5.5, (7a) for a similar formula.
Therefore in case $\mathcal{S}=\mathcal{S}_0$, (\ref{eq:5-1}) becomes
$$V_{n+1}=\left( \prod_{i=1}^gG \left( \bar{\theta}_{n-1}(\alpha_i)-1,
\bar{\theta}_{n-1}(\beta_i)-1 \right)
-\exp (\omega) \right)_{n+1},$$
where $G(X,Y)$ is the right hand side of (\ref{eq:5-2}).
From (\ref{eq:2-1}) and (\ref{eq:3-4}), we obtain the following
recursive formulas for $\ell^{\mathcal{S}_0}$:
$$\ell_n^{\mathcal{S}_0}(\alpha_i)=\frac{1}{n+1}\Phi(V_n^{B_i}),$$
$$\ell_n^{\mathcal{S}_0}(\beta_i)=\frac{-1}{n+1}\Phi(V_n^{A_i}).$$
In this way we can effectively compute the terms of $\ell^{\mathcal{S}_0}(\xi_i)$.
Here we give the first few terms of $\ell^{\mathcal{S}_0}$ for $g=1,2$.

\begin{exple}[the case of genus 1]
{\rm For simplicity, we write $\alpha_1=\alpha$, $\beta_1=\beta$
and $A_1=A$, $B_1=B$. Modulo $\widehat{T}_6$, we have}
\begin{eqnarray*}
\ell^{\mathcal{S}_0}(\alpha) &\equiv &
A+\frac{1}{2}[A,B]+\frac{1}{12}[B,[B,A]]
-\frac{1}{8}[A,[A,B]]+\frac{1}{24}[A,[A,[A,B]]] \\
& & -\frac{1}{720}[B,[B,[B,[B,A]]]]-\frac{1}{288}
[A,[A,[A,[A,B]]]]-\frac{1}{288}[A,[B,[B,[B,A]]]] \\
& & -\frac{1}{288}[B,[A,[A,[A,B]]]]
+\frac{1}{144}[[A,B],[B,[B,A]]]+\frac{1}{128}[[A,B],[A,[A,B]]];
\end{eqnarray*}
\begin{eqnarray*}
\ell^{\mathcal{S}_0}(\beta) &\equiv &
B-\frac{1}{2}[A,B]+\frac{1}{12}[A,[A,B]]
-\frac{1}{8}[B,[B,A]]+\frac{1}{24}[B,[B,[B,A]]] \\
& & -\frac{1}{720}[A,[A,[A,[A,B]]]]-\frac{1}{288}
[B,[B,[B,[B,A]]]]-\frac{1}{288}[B,[A,[A,[A,B]]]] \\
& & -\frac{1}{288}[A,[B,[B,[B,A]]]]
-\frac{1}{144}[[A,B],[A,[A,B]]]-\frac{1}{128}[[A,B],[B,[B,A]]].
\end{eqnarray*}

\begin{exple}[the case of genus 2]
{\rm Modulo $\widehat{T}_5$, we have}
\begin{eqnarray*}
\ell^{\mathcal{S}_0}(\alpha_1) &\equiv &
A_1+\frac{1}{2}[A_1,B_1] \\
& & +\frac{1}{12}[B_1,[B_1,A_1]]-\frac{1}{8}[A_1,[A_1,B_1]]
-\frac{1}{4}[A_1,[A_2,B_2]] \\
& & +\frac{1}{24}[A_1,[A_1,[A_1,B_1]]]-\frac{1}{10}[[A_1,B_1],[A_2,B_2]]
+\frac{1}{40}[A_1,[B_1,[A_2,B_2]]] \\
& & +\frac{1}{40}[A_1,[B_2,[A_2,B_2]]]+\frac{1}{40}[A_1,[A_1,[A_2,B_2]]]
+\frac{1}{40}[A_1,[A_2,[A_2,B_2]]];
\end{eqnarray*}

\end{exple}

\begin{eqnarray*}
\ell^{\mathcal{S}_0}(\beta_1) &\equiv &
B_1-\frac{1}{2}[A_1,B_1] \\
& & +\frac{1}{12}[A_1,[A_1,B_1]]-\frac{1}{8}[B_1,[B_1,A_1]]
-\frac{1}{4}[B_1,[A_2,B_2]] \\
& & +\frac{1}{24}[B_1,[B_1,[B_1,A_1]]]+\frac{1}{10}[[A_1,B_1],[A_2,B_2]]
+\frac{1}{40}[B_1,[A_1,[A_2,B_2]]] \\
& & +\frac{1}{40}[B_1,[A_2,[A_2,B_2]]]+\frac{1}{40}[B_1,[B_1,[A_2,B_2]]]
+\frac{1}{40}[B_1,[B_2,[A_2,B_2]]];
\end{eqnarray*}

\begin{eqnarray*}
\ell^{\mathcal{S}_0}(\alpha_2) &\equiv &
A_2+\frac{1}{2}[A_2,B_2] \\
& & +\frac{1}{12}[B_2,[B_2,A_2]]-\frac{1}{8}[A_2,[A_2,B_2]]
+\frac{1}{4}[A_2,[A_1,B_1]] \\
& & +\frac{1}{24}[A_2,[A_2,[A_2,B_2]]]-\frac{1}{10}[[A_1,B_1],[A_2,B_2]]
-\frac{1}{40}[A_2,[B_2,[A_1,B_1]]] \\
& & -\frac{1}{40}[A_2,[B_1,[A_1,B_1]]]-\frac{1}{40}[A_2,[A_2,[A_1,B_1]]]
-\frac{1}{40}[A_2,[A_1,[A_1,B_1]]];
\end{eqnarray*}

\begin{eqnarray*}
\ell^{\mathcal{S}_0}(\beta_2) &\equiv &
B_2-\frac{1}{2}[A_2,B_2] \\
& & +\frac{1}{12}[A_2,[A_2,B_2]]-\frac{1}{8}[B_2,[B_2,A_2]]
+\frac{1}{4}[B_2,[A_1,B_1]] \\
& & +\frac{1}{24}[B_2,[B_2,[B_2,A_2]]]+\frac{1}{10}[[A_1,B_1],[A_2,B_2]]
-\frac{1}{40}[B_2,[A_2,[A_1,B_1]]] \\
& & -\frac{1}{40}[B_2,[A_1,[A_1,B_1]]]-\frac{1}{40}[B_2,[B_2,[A_1,B_1]]]
-\frac{1}{40}[B_2,[B_1,[A_1,B_1]]].
\end{eqnarray*}

\end{exple}

\vspace{0.5cm}
\noindent \textbf{Acknowledgments.}
The author wishes to express his gratitude to
Alex Bene, who kindly suggested to him to extend the construction
for not necessarily symplectic generators,
Nariya Kawazumi for communicating to him a proof of a symmetry property of $\theta^0$,
and Robert Penner for warm comments to a rough draft of this paper. 
He also would like to thank
Shigeyuki Morita and Masatoshi Sato for valuable comments.

This research is supported by JSPS Research Fellowships
for Young Scientists (22$\cdot$4810).

\noindent \textsc{Yusuke Kuno\\
Department of Mathematics,\\
Graduate School of Science,\\
Hiroshima University,\\
1-3-1 Kagamiyama, Higashi-Hiroshima, Hiroshima 739-8526 JAPAN}

\noindent \texttt{E-mail address: kunotti@hiroshima-u.ac.jp}


\begin{thebibliography}{00}
\bibitem{BKP} A.\ J.\ Bene, N.\ Kawazumi, and R.\ C.\ Penner,
Canonical extensions of the Johnson homomorphisms to the Torelli groupoid,
Adv.\ Math.\ \textbf{221}, 627-659 (2009)

\bibitem{Go} W.\ M.\ Goldman,
Invariant functions on Lie groups and
Hamiltonian flows of
surface groups representations,
Invent.\ Math.\ \textbf{85}, 263-302 (1986)

\bibitem{Joh1} D.\ Johnson,
An abelian quotient of the mapping class group $\mathcal{I}_g$,
Math.\ Ann.\ \textbf{249}, 225-242 (1980)

\bibitem{Joh2} D.\ Johnson,
A survey of the Torelli group,
Contemporary Math.\ \textbf{20}, 165-179 (1983)

\bibitem{Ka} N.\ Kawazumi,
Cohomological aspects of Magnus expansions,
preprint, math.GT/0505497 (2005)

\bibitem{Ka2} N.\ Kawazumi,
Harmonic Magnus expansion
on the universal family of Riemann surfaces,
preprint, math.GT/0603158 (2006)

\bibitem{KK} N.\ Kawazumi and Y.\ Kuno,
The logarithms of Dehn twists, preprint, arXiv:1008.5017 (2010)

\bibitem{Kon} M.\ Kontsevich,
Formal (non)-commutative symplectic geometry, in:
``The Gel'fand Mathematical Seminars, 1990-1992",
Birkh\"auser, Boston, 173-187  (1993) 

\bibitem{Mag} W.\ Magnus,
Beziehungen zwischen Gruppen und Idealen in
einem speziellen Ring,
Math.\ Ann.\ \textbf{111}, 259-280 (1935)

\bibitem{MKS} W.\ Magnus, A.\ Karrass, D.\ Solitar,
Combinatorial group theory, Dover, New York (1976)

\bibitem{Mas} G.\ Massuyeau,
Infinitesimal Morita homomorphisms and the
tree-level of the LMO invariant, preprint,
arXiv:0809.4629 (2008)

\bibitem{Ser} J.\ -P.\ Serre, Lie algebras and Lie groups,
Lecture Notes in Mathematics \textbf{1500},
Springer-Verlag, Berlin (2006)
\end{thebibliography}
\end{document}